\newtheorem{theorem}{Theorem}[section]
\newtheorem{Theorem}{Theorem}
\newtheorem{lemma}[theorem]{Lemma}
\newtheorem{proposition}[theorem]{Proposition}
\newtheorem{corollary}[theorem]{Corollary}
\newtheorem{Corollary}[Theorem]{Corollary}
\theoremstyle{definition}
\newtheorem{definition}[theorem]{Definition}
\theoremstyle{remark}
\def\F{\mathbb{F}}
\def\Z{\mathbb{Z}}
\def\cC{\mathcal{C}}
\def\CFKi{CFK^{\infty}}
\newcommand{\Pin}{\mathit{Pin}}
\def\im{\operatorname{im}}
\def \del {\partial}
\def\d{\partial}
\def\co{\colon}
\def\id{\textup{id}}
\title{}
\author[Kristen Hendricks]{Kristen Hendricks}
\thanks{The first author was partially supported by NSF grant DMS-1663778.}
\address{Department of Mathematics, Michigan State University, East Lansing, MI 48824}
\email{hendricks@math.msu.edu}
\numberwithin{equation}{section}
\subjclass[2013]{}
\author[Jennifer Hom]{Jennifer Hom}
\thanks{The second author was partially supported by NSF grant DMS-1552285 and a Sloan Research Fellowship.}
\address {School of Mathematics, Georgia Institute of Technology, Atlanta, GA 30332}
\email{hom@math.gatech.edu}
\title{A note on knot concordance and involutive knot Floer homology}
\begin{document}
\maketitle

\begin{abstract}
We prove that if two knots are concordant, then their involutive knot Floer complexes satisfy a certain type of stable equivalence.
\end{abstract}

\section{Introduction}

The knot Floer homology package of Ozsv\'ath-Szab\'o \cite{OSknots} and Rasmussen \cite{Rasmussenthesis} has many applications to concordance. For example, many different smooth concordance invariants can be extracted from the filtered chain homotopy type of the knot Floer complex, such as $\tau$ \cite{OS4ball}, $\Upsilon(t)$ \cite{OSS}, and $\nu^+$ \cite{HomWu}. Furthermore, the second author \cite{Homconcordance} showed that, modulo an appropriate equivalence relation, the set of knot Floer complexes forms a group, and that there is a homomorphism from the knot concordance group to this group. In \cite[Theorem 1]{Homsurvey}, she showed that if two knots are concordant, then their knot Floer complexes satisfy a certain type of stable equivalence.

Recently, Manolescu and the first author \cite{HendricksManolescu} used the conjugation symmetry on Heegaard Floer complexes to define involutive Heegaard Floer homology. They similarly considered the conjugation action on the knot Floer complex. Zemke \cite{Zemkesum} showed that, under an appropriate equivalence relation, the set of knot Floer complexes together with the extra structure given by the conjugation action form a group, and that there is a homomorphism from the knot concordance group to this group. The aim of this note is to prove an involutive analog of \cite[Theorem 1]{Homsurvey}. Throughout, $\F=\Z/2\Z$.

\begin{Theorem}\label{thm:splitting}
If $K$ is slice, then $(\CFKi(K), \iota_K)$ is filtered chain homotopic to 
\[ (\F[U, U^{-1}], \id) \oplus (A, \iota_A), \]
where $A$ is acyclic, i.e., $H_*(A)=0$.
\end{Theorem}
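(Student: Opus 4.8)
The plan is to produce the splitting directly from the functoriality of the knot Floer cobordism maps, following the strategy of the non-involutive statement \cite[Theorem~1]{Homsurvey} and adding the conjugation naturality of these maps \cite{HendricksManolescu,Zemkesum}. A slice disk for $K$, with a standard disk removed near $S^3$, determines an annular concordance $\mathcal{C}$ from $K$ to the unknot $U_0$ inside $S^3\times[0,1]$, together with its reverse $\overline{\mathcal{C}}$ from $U_0$ to $K$. These induce filtered, grading-preserving, $U$-equivariant chain maps
\[ f\colon \CFKi(K)\longrightarrow \CFKi(U_0)=\F[U,U^{-1}],\qquad g\colon \F[U,U^{-1}]\longrightarrow \CFKi(K), \]
each of which is a quasi-isomorphism (concordance maps are local maps), and, because $\iota_{U_0}=\id$, conjugation naturality supplies filtered chain homotopies realizing $f\iota_K\simeq f$ and $\iota_K g\simeq g$. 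Since $\mathcal{C}\circ\overline{\mathcal{C}}$ is again a concordance, the composition law identifies $fg$ up to filtered homotopy with the cobordism map of $\mathcal{C}\circ\overline{\mathcal{C}}$.

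First I would establish the underlying (non-involutive) splitting. The map $fg\colon\F[U,U^{-1}]\to\F[U,U^{-1}]$ is a grading-preserving, $U$-equivariant chain map of a complex with trivial differential, hence equals $\lambda\cdot\id$ for some $\lambda\in\F$; since $f$ and $g$ are quasi-isomorphisms, $\lambda=1$, so $fg=\id$. Therefore $p:=gf$ is a filtered, grading-preserving, $U$-equivariant chain map with $p^2=g(fg)f=gf=p$, and a filtered idempotent chain map splits the complex: $\CFKi(K)=\im(p)\oplus\ker(p)$ as filtered complexes. Because $\F[U,U^{-1}]$ has trivial differential and $f$ is surjective, $\im(p)=\im(g)$ consists of cycles and $g$ restricts to a filtered isomorphism $\F[U,U^{-1}]\xrightarrow{\ \sim\ }\im(p)$; and since $g_{*}$ is a grading-preserving, $U$-equivariant injection onto a submodule of $H_*(\CFKi(K))\cong\F[U,U^{-1}]$, it is an isomorphism, so $A:=\ker(p)$ is acyclic.

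Next I would show $\iota_K$ respects this splitting. Writing $\iota_K=\left(\begin{smallmatrix}\alpha&\beta\\\gamma&\delta\end{smallmatrix}\right)$ in block form for $\CFKi(K)=\im(p)\oplus A$, the relations $f\iota_K\simeq f$ and $\iota_K g\simeq g$ give $p\iota_K=gf\iota_K\simeq gf=p$ and $\iota_K p=\iota_K gf\simeq gf=p$ via filtered homotopies; adding these (we are over $\F=\Z/2$) shows the off-diagonal part $\left(\begin{smallmatrix}0&\beta\\\gamma&0\end{smallmatrix}\right)=p\iota_K+\iota_K p$ is filtered null-homotopic. Hence $\iota_K$ is filtered chain homotopic to its diagonal part $\left(\begin{smallmatrix}\alpha&0\\0&\delta\end{smallmatrix}\right)$, and this homotopy is exactly a homotopy equivalence of $\iota$-complexes
\[ (\CFKi(K),\iota_K)\ \simeq\ (\im(p),\alpha)\oplus(A,\delta). \]
Finally $\alpha$ is a grading-preserving, $U$-equivariant endomorphism of $\F[U,U^{-1}]$, so $\alpha=\mu\cdot\id$; since $\left(\begin{smallmatrix}\alpha&0\\0&\delta\end{smallmatrix}\right)$ is a homotopy equivalence and $A$ is acyclic, $\alpha$ is a quasi-isomorphism of $\im(p)\cong\F[U,U^{-1}]$, forcing $\mu=1$ and $\alpha=\id$. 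Setting $\iota_A:=\delta$, which inherits the properties of an involution up to filtered homotopy, yields $(\CFKi(K),\iota_K)\simeq(\F[U,U^{-1}],\id)\oplus(A,\iota_A)$ with $A$ acyclic.

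The hard part will be the input of the first step, not the algebra: namely, confirming that the knot Floer cobordism maps carry all the properties used above — naturality under concordance, the composition law, the local-map (quasi-isomorphism) property, and conjugation naturality with $\iota_{U_0}=\id$ — in the filtered, $U$-equivariant setting; this is where the work of Zemke and of Hendricks--Manolescu enters. Once that functorial package is in place, the only remaining routine point is that filtered idempotents split chain complexes compatibly with the Alexander filtration. It is worth noting that the sole geometric input is the existence of a concordance from $K$ to the unknot together with a reverse concordance, so the same argument shows that any $\iota$-complex admitting morphisms to and from $(\F[U,U^{-1}],\id)$ with these properties splits off $(\F[U,U^{-1}],\id)$.
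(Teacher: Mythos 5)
Your proposal is correct and follows essentially the same route as the paper: the paper packages your cobordism-map input as Zemke's statement that a slice disk makes $(\CFKi(K),\iota_K)$ locally equivalent to $(\F[U,U^{-1}],\id)$, splits off $\F[U,U^{-1}]$ via the idempotent $F\circ G$ exactly as you do with $gf$, and kills the off-diagonal blocks of $\iota_K$ using the homotopies coming from the local equivalence --- which is precisely your observation that $p\iota_K+\iota_K p$ is null-homotopic over $\F$. The only cosmetic difference is that the paper defines the replacement $\iota'_C(x,y)=(x,0)+p_2\circ\iota_C(0,y)$ so that it is the identity on the first factor by construction, whereas you identify the block $\alpha$ with $\id$ a posteriori.
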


\begin{Corollary}\label{cor:splitting}
If $K_1$ and $K_2$ are concordant, then we have the following filtered chain homotopy equivalence
\[ (\CFKi(K_1), \iota_{K_1}) \oplus (A_1, \iota_{A_1}) \simeq (\CFKi(K_2), \iota_{K_2})\oplus (A_2, \iota_{A_2}), \] 
where $A_1, A_2$ are acyclic, i.e., $H_*(A_1)=H_*(A_2)=0$.
\end{Corollary}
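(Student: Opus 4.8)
The plan is to deduce Corollary~\ref{cor:splitting} from Theorem~\ref{thm:splitting} together with the behavior of $(\CFKi, \iota_K)$ under connected sum. Write $-K$ for the reverse of the mirror of $K$, so that $-K$ represents the inverse of $K$ in the smooth concordance group and $K_1$ is concordant to $K_2$ precisely when $K_1 \#(-K_2)$ is slice. By Zemke's connected sum formula for involutive knot Floer homology \cite{Zemkesum}, there are filtered chain homotopy equivalences of $\iota$-complexes
\begin{align*}
(\CFKi(K_1 \#(-K_2)),\, \iota_{K_1 \#(-K_2)}) &\simeq (\CFKi(K_1), \iota_{K_1}) \otimes (\CFKi(-K_2), \iota_{-K_2}),\\
(\CFKi((-K_2)\# K_2),\, \iota_{(-K_2)\# K_2}) &\simeq (\CFKi(-K_2), \iota_{-K_2}) \otimes (\CFKi(K_2), \iota_{K_2}).
\end{align*}
Since $K_1 \#(-K_2)$ and $(-K_2)\# K_2$ are both slice, applying Theorem~\ref{thm:splitting} to each produces acyclic $\iota$-complexes $(A, \iota_A)$ and $(A', \iota_{A'})$ with
\begin{align*}
(\CFKi(K_1), \iota_{K_1}) \otimes (\CFKi(-K_2), \iota_{-K_2}) &\simeq (\F[U,U^{-1}], \id) \oplus (A, \iota_A),\\
(\CFKi(-K_2), \iota_{-K_2}) \otimes (\CFKi(K_2), \iota_{K_2}) &\simeq (\F[U,U^{-1}], \id) \oplus (A', \iota_{A'}).
\end{align*}

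Next I would tensor the first of these equivalences on the right with $(\CFKi(K_2), \iota_{K_2})$ and the second on the left with $(\CFKi(K_1), \iota_{K_1})$. Tensoring with a fixed $\iota$-complex carries filtered chain homotopy equivalences of $\iota$-complexes to filtered chain homotopy equivalences and distributes over direct sums; moreover $(\F[U,U^{-1}], \id)$ is a unit for $\otimes$, and $\otimes$ is associative and commutative up to filtered chain homotopy equivalence of $\iota$-complexes. Consequently the common expression $(\CFKi(K_1), \iota_{K_1}) \otimes (\CFKi(-K_2), \iota_{-K_2}) \otimes (\CFKi(K_2), \iota_{K_2})$ is filtered chain homotopic both to
\[ (\CFKi(K_2), \iota_{K_2}) \oplus \bigl(A \otimes \CFKi(K_2),\ \iota_A \otimes \iota_{K_2}\bigr) \]
and to
\[ (\CFKi(K_1), \iota_{K_1}) \oplus \bigl(\CFKi(K_1) \otimes A',\ \iota_{K_1} \otimes \iota_{A'}\bigr). \]
Put $A_2 = A \otimes \CFKi(K_2)$ and $A_1 = \CFKi(K_1) \otimes A'$. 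By the Künneth theorem over the principal ideal domain $\F[U,U^{-1}]$, the tensor product of the acyclic complex $A$ (respectively $A'$) with any chain complex is again acyclic, so $H_*(A_1) = H_*(A_2) = 0$. Splicing the two equivalences together gives
\[ (\CFKi(K_1), \iota_{K_1}) \oplus (A_1, \iota_{A_1}) \simeq (\CFKi(K_2), \iota_{K_2}) \oplus (A_2, \iota_{A_2}), \]
as claimed.

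The only substantive input beyond Theorem~\ref{thm:splitting} is the involutive connected sum formula, so the step to be careful with is its exact form: one needs it at the level of filtered $\iota$-complexes up to filtered chain homotopy, which is Zemke's theorem \cite{Zemkesum} rather than anything proved in \cite{HendricksManolescu}, and one should confirm that the equivalence relation used in the statement of the corollary is precisely the one under which tensoring with a fixed $\iota$-complex and rearranging direct summands are valid operations. The remaining ingredients — the identification of the concordance inverse, the stability of acyclicity under $\otimes$, and the direct-sum bookkeeping — are routine.
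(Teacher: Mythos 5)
Your argument is correct and is, in skeleton, the same as the paper's: take an object that is ``locally trivial,'' split off $(\F[U,U^{-1}],\id)$ using the splitting result, then multiply by $(\CFKi(K_2),\iota_{K_2})$ and reassociate. The paper runs this entirely at the level of abstract $\iota_K$-complexes: it first proves Corollary \ref{cor:stableequiv}, which says that any two \emph{locally equivalent} $\iota_K$-complexes become filtered chain homotopy equivalent after adding acyclic summands, using the dual complex $(C_2^*,\iota_{C_2}^*)$ as the inverse in the local equivalence group $\mathfrak{I}_K$ and applying Proposition \ref{prop:split} to $(C_1,\iota_{C_1})\times(C_2^*,\iota_{C_2}^*)$. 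You instead stay at the level of knots, using the concordance inverse $-K_2$ together with Zemke's connected sum formula to realize the same triple product; this is equivalent since $(\CFKi(-K),\iota_{-K})$ represents the dual complex, but it makes your proof depend on Theorem \ref{thm:splitting} applied to the specific slice knots $K_1\#(-K_2)$ and $(-K_2)\#K_2$, whereas the paper's route yields the slightly more general statement for arbitrary locally equivalent complexes. One small imprecision: the involution on a product of $\iota_K$-complexes is not the plain tensor $\iota_A\otimes\iota_{K_2}$ but Zemke's $\times_1$ or $\times_2$ involution, which carries a correction term $(\Phi\otimes\Psi)\circ(\iota\otimes\iota)$; you should write the error summands as $(A,\iota_A)\times(\CFKi(K_2),\iota_{K_2})$, etc. This does not affect the conclusion, since the corollary only requires the summands to be acyclic and the product does respect direct sum splittings, exactly as the paper invokes from \cite{Zemkesum}.
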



\section*{Acknowledgements} We thank Tye Lidman, Charles Livingston, Ciprian Manolescu, Matt Stoffregen, and Ian Zemke for helpful conversations.

\section{Background}

In 2013, Manolescu introduced a $\Pin(2)$-equivariant version of Seiberg-Witten Floer homology and used it to resolve the Triangulation Conjecture~\cite{Manolescu16:triangulation}. Since then, several authors have given applications of this invariant, especially to the homology cobordism group~\cite{Manolescu14:intersection,Lin15:KO,Stoffregen:SFS,Stoffregen:sum,Stoffregen:remark}.
F.~Lin also gave a reformulation to monopole Floer homology, and deduced various applications~\cite{Lin:MB-Floer,Lin:Exact,Lin:higher-comp,Lin:correction,Lin:involutive-Kh}.

Two years later, Manolescu and the first author introduced a shadow of $\Pin(2)$-equivariant Seiberg-Witten Floer homology, called \emph{involutive Heegaard Floer homology}~\cite{HM:involutive}, in Ozsv\-\'ath-Szab\'o's Heegaard Floer homology~\cite{OS04:HolomorphicDisks}. Involutive Heegaard Floer homology has had a number of applications, again mainly to the homology cobordism group~\cite{HMZ:involutive-sum,BH:cuspidal,DM:involutive-plumbed, Zemkesum, HL:involutive-bordered}. 

Like ordinary Heegaard Floer homology, involutive Heegaard Floer homology has a version for knots: Manolescu and the first author associate to a knot $K$ an order-four symmetry $\iota_K$ on the knot Floer complex $\CFKi(K)$, and extract various concordance invariants from this data \cite{HM:involutive}. In \cite{Zemkesum}, Zemke studies the behavior of these complexes and the associated involutions under connected sum. In this section, we recap some of his definitions and results, in preparation for proving Theorem \ref{thm:splitting} in Section \ref{sec:involutive}. We begin with the following definition, which is a specialization of \cite[Definition 2.2]{Zemkesum}.

\begin{definition} We say that $(C, \del, B, \iota_C)$ is an \emph{$\iota_K$-complex} if
\begin{itemize}
\item $(C, \del)$ is a finitely-generated, free, $\Z$-graded, $(\Z\oplus \Z)$-filtered, $\F[U,U^{-1}]$-complex with a filtered basis $B$;
\item Given an element $x \in B$, $\del x = \sum_{y \in B} U^{n_y} y$ for some set of integers $n_y \geq 0$; 
\item The action of $U$ lowers homological grading by $2$ and each filtration level by $1$;
\item There is an isomorphism $H_*(C, \del) \cong \F[U,U^{-1}]$;
\item $\iota_C$ is a skew-filtered $U$-equivariant endomorphism of $C$;
\item $\iota_C^2 \simeq \mathrm{id} + \Phi_B \circ \Psi_B$, where $\Phi_B \co C \rightarrow C$ and $\Psi_B \co C \rightarrow C$ are formal derivatives of $\del$.

\end{itemize}

\end{definition} 

\noindent (For more on the definition of the maps $\Phi_B$ and $\Psi_B$, see \cite[p. 7]{Zemkesum}.) 

Typically we omit the differential and basis from the notation. This definition is not quite Zemke's; one can think of our $\iota_K$-complexes as the part of his $\iota_K$-complexes concentrated in Alexander grading zero \cite[Remark 2.3]{Zemkesum}. If $K$ is a knot, then $(\CFKi(K), \iota_K)$ can be made into an $\iota_K$-complex by picking a basis for $\CFKi(K)$. The following notion of equivalence between two $\iota_K$-complexes is particularly useful for studying concordance.

\begin{definition} \cite[Definition 2.4]{Zemkesum} \label{def:iotaK} Two $\iota_K$-complexes $(C_1, \iota_{C_1})$ and $(C_2, \iota_{C_2})$ are said to be \emph{locally equivalent} if there are filtered, grading-preserving $\F[U,U^{-1}]$-equivariant chain maps 
\begin{align*}
F \co C_1 \rightarrow C_2 \qquad \qquad G \co C_2 \rightarrow C_1
\end{align*}
such that
\begin{align*}
F \circ \iota_{C_1} \simeq \iota_{C_2} \circ F \qquad \qquad G \circ \iota_{C_2} \simeq \iota_{C_1} \circ G
\end{align*}
via skew-filtered $U$-equivariant chain homotopy equivalences. (If in addition $F \circ G \simeq \id$ and $G \circ F \simeq \id$ via filtered $U$-equivariant chain homotopy equivalences, the $\iota_K$-complexes are said to be homotopy equivalent.)
\end{definition}

One can define two possible products on the set of $\iota_K$-complexes, denoted $\times_1$ and $\times_2$, and given by
\begin{align*}
(C_1, \iota_{C_1})\times_1 (C_2, \iota_{C_2}) =(C_1 \otimes C_2, \iota_{C_1}\otimes \iota_{C_2} + (\Phi_{B_1}\otimes \Psi_{B_2})\circ (\iota_{C_1} \otimes \iota_{C_2})) \\
(C_1, \iota_{C_1})\times_2 (C_2, \iota_{C_2}) =(C_1 \otimes C_2, \iota_{C_1}\otimes \iota_{C_2} + (\Psi_{B_1} \otimes \Phi_{B_2})\circ (\iota_{C_1} \otimes \iota_{C_2})) \\
\end{align*}

Zemke shows that $(C_1, \iota_{C_1})\times_1 (C_2, \iota_{C_2})$ is filtered chain-homotopy equivalent to $(C_1, \iota_{C_1})\times_2 (C_2, \iota_{C_2})$. Following similar work in \cite{Stoffregen:SFS} and \cite{HMZ:involutive-sum}, Zemke further shows that either of these products makes the set of $\iota_K$-complexes up to the relationship of local equivalence into an abelian group $\mathfrak I_K$ \cite[Proposition 2.6]{Zemkesum}. One then obtains a homomorphism from $\cC$ the smooth knot concordance group to $\mathfrak I_K$ as follows.

\begin{proposition} \cite[Theorem 1.5]{Zemkesum}
Let $\cC$ be the smooth knot concordance group. The map 
\begin{align*}
\cC & \rightarrow \mathfrak I_K \\
K &\mapsto [\CFKi(K), \iota_K]
\end{align*}
is a well-defined group homomorphism.
\end{proposition}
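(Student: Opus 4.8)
The plan is to verify the two properties a well-defined group homomorphism requires: that the assignment carries connected sum to the product operation on $\mathfrak{I}_K$, and that it is constant on concordance classes. I take the group structure on $\mathfrak{I}_K$ from \cite[Proposition 2.6]{Zemkesum} and Theorem \ref{thm:splitting} as given.

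First I would establish multiplicativity. Zemke's connected sum formula identifies $(\CFKi(K_1 \# K_2), \iota_{K_1 \# K_2})$, up to the filtered chain homotopy equivalence of Definition \ref{def:iotaK}, with the product $(\CFKi(K_1), \iota_{K_1}) \times_1 (\CFKi(K_2), \iota_{K_2})$; indeed, this identification is precisely what motivates the definition of $\times_1$, since the involution on the connected sum acquires the correction term $(\Phi_{B_1} \otimes \Psi_{B_2}) \circ (\iota_{C_1} \otimes \iota_{C_2})$. Because a filtered chain homotopy equivalence of $\iota_K$-complexes is in particular a local equivalence, and because $\times_1$ descends to the multiplication on $\mathfrak{I}_K$, this yields
\[ [\CFKi(K_1 \# K_2), \iota_{K_1 \# K_2}] = [\CFKi(K_1), \iota_{K_1}] \cdot [\CFKi(K_2), \iota_{K_2}] \]
in $\mathfrak{I}_K$. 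Thus $K \mapsto [\CFKi(K), \iota_K]$ is a homomorphism from the monoid of knots under connected sum into $\mathfrak{I}_K$.

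Next I would show that slice knots are sent to the identity of $\mathfrak{I}_K$, which is the local equivalence class of the trivial complex $(\F[U,U^{-1}], \id)$. If $K$ is slice, Theorem \ref{thm:splitting} provides a filtered chain homotopy equivalence
\[ (\CFKi(K), \iota_K) \simeq (\F[U,U^{-1}], \id) \oplus (A, \iota_A) \]
with $A$ acyclic and the involution block diagonal. The inclusion of and projection onto the $(\F[U,U^{-1}], \id)$ summand are filtered, grading-preserving, $\F[U,U^{-1}]$-equivariant chain maps that commute strictly with the involutions (hence up to skew-filtered $U$-equivariant homotopy), so they constitute a local equivalence between $(\CFKi(K), \iota_K)$ and $(\F[U,U^{-1}], \id)$. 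Therefore $[\CFKi(K), \iota_K]$ is the identity of $\mathfrak{I}_K$.

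Finally I would combine these steps. Two knots $K_1$ and $K_2$ are concordant exactly when $K_1 \# m(K_2)$ is slice, where $m(K_2)$ is the reverse mirror, i.e.\ the concordance inverse of $K_2$. The two previous steps give that $[\CFKi(K_1)] \cdot [\CFKi(m(K_2))] = [\CFKi(K_1 \# m(K_2))]$ is the identity, so $[\CFKi(m(K_2))] = [\CFKi(K_1)]^{-1}$; running the same argument on the slice knot $K_2 \# m(K_2)$ gives $[\CFKi(m(K_2))] = [\CFKi(K_2)]^{-1}$, and hence $[\CFKi(K_1)] = [\CFKi(K_2)]$. Thus the assignment descends to a well-defined map on $\cC$, and multiplicativity shows it is a homomorphism, since addition in $\cC$ is induced by connected sum. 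The main obstacle is the connected sum formula of the first step: identifying $\iota_{K_1 \# K_2}$ with the correction-term-twisted tensor product of $\iota_{K_1}$ and $\iota_{K_2}$. This is the one piece carrying genuine geometric content, resting on Zemke's analysis of the behavior of the conjugation symmetry and of the formal derivatives $\Phi$ and $\Psi$ under the connected sum cobordism, so I would invoke \cite{Zemkesum} for it and then check only that the resulting inclusion and projection maps meet the filtration, grading, and equivariance requirements of Definition \ref{def:iotaK}; the remaining bookkeeping, namely that $\mathfrak{I}_K$ is a group and that local equivalence is coarser than homotopy equivalence, is purely formal.
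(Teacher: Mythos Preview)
Your argument contains a circularity. You invoke Theorem~\ref{thm:splitting} to show that slice knots map to the identity of $\mathfrak{I}_K$. But in this paper, Theorem~\ref{thm:splitting} is itself deduced from the proposition you are trying to prove: the paper's argument is that if $K$ is slice then, by Zemke's result (i.e.\ this proposition), $(\CFKi(K),\iota_K)$ is locally equivalent to $(\F[U,U^{-1}],\id)$, and then Proposition~\ref{prop:split} upgrades this local equivalence to the splitting. So appealing to Theorem~\ref{thm:splitting} here presupposes exactly the statement that slice knots are locally equivalent to the trivial complex, which is the well-definedness you set out to establish.

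Note also that the paper does not give its own proof of this proposition; it is quoted as \cite[Theorem~1.5]{Zemkesum} and used as input. Zemke's proof runs in the opposite order from yours: he first shows directly, using cobordism maps in link Floer homology and their naturality with respect to conjugation, that concordant knots have locally equivalent $\iota_K$-complexes, and then combines this with the connected sum formula to obtain the homomorphism. Your first step (multiplicativity via the connected sum formula) is fine and is indeed the geometric heart of the matter, but for the second step you need an argument for local triviality of slice knots that does not already assume it; that is precisely the content Zemke supplies via the concordance cobordism maps, and it is not something Theorem~\ref{thm:splitting} can provide without circularity.
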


\noindent Zemke \cite[Definition 2.5 and Proposition 2.6]{Zemkesum} shows that the inverse of $[C, \iota_C]$ is $[C^*, \iota^*_C]$, where $C^*=\operatorname{Hom}_{\F[U, U^{-1}]}(C, \F[U, U^{-1}])$, the map $\iota^*$ is the dual of $\iota$, and $B^*$ is a dual basis to $B$. The identity element of $\mathfrak I_K$ is $[\F[U,U^{-1}], \mathrm{id}]$.

\section{Proof of Theorem} \label{sec:involutive}
Since Zemke \cite[Theorem 1.5]{Zemkesum} showed that concordant knots have locally equivalent $\iota_K$-complexes,
Theorem \ref{thm:splitting} and Corollary \ref{cor:splitting} follow immediately from the following proposition and corollary.

\begin{proposition}\label{prop:split}
If $(C, \iota_C)$ is locally equivalent $(\F[U, U^{-1}], \id)$, then $(C, \iota_C)$ is filtered chain homotopy equivalent to
\[ (\F[U, U^{-1}], \id) \oplus (A, \iota_A), \]
where $A$ is some acyclic complex, i.e., $H_*(A)=0$.
\end{proposition}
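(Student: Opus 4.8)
The plan is to upgrade the given local equivalence to an honest filtered chain homotopy equivalence with a split summand, by producing a one-sided idempotent-up-to-homotopy and splitting off the image. Concretely, local equivalence to $(\F[U,U^{-1}],\mathrm{id})$ provides filtered grading-preserving $\F[U,U^{-1}]$-equivariant chain maps $F\co C\to\F[U,U^{-1}]$ and $G\co\F[U,U^{-1}]\to C$ with $F\circ\iota_C\simeq\iota_{\F}\circ F=F$ and $\iota_C\circ G\simeq G$, and with $F\circ G\simeq \mathrm{id}$ on $\F[U,U^{-1}]$ (since any degree-preserving $U$-equivariant self-map of $\F[U,U^{-1}]$ inducing an isomorphism on homology is the identity, $F\circ G$ is actually chain homotopic to the identity, and in fact equals it up to boundaries which one can clear). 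The key object is $P:=G\circ F\co C\to C$: it is a filtered, grading-preserving, $U$-equivariant chain map, and $P^2=G\circ(F\circ G)\circ F\simeq G\circ F=P$, so $P$ is an idempotent up to filtered chain homotopy.

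First I would show that an ordinary (non-involutive) filtered splitting exists: since $(C,\d)$ is finitely generated and free over $\F[U,U^{-1}]$ and $P$ is an idempotent up to filtered chain homotopy with $H_*(P)$ an honest idempotent on $H_*(C)\cong\F[U,U^{-1}]$ of rank one, a standard homological-algebra argument over the (graded, filtered) base lets one replace $P$ by a genuine filtered idempotent $P'$ chain homotopic to $P$; then $C\simeq \operatorname{im}(P')\oplus\ker(P')$ as filtered complexes, with $\operatorname{im}(P')\simeq\F[U,U^{-1}]$ and $A:=\ker(P')$ acyclic because $H_*(C)=H_*(\operatorname{im} P')\oplus H_*(A)$ forces $H_*(A)=0$. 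This is essentially the argument behind \cite[Theorem 1]{Homsurvey}, and I would cite/adapt it rather than redo it.

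Next, the genuinely new part: I must arrange that this splitting is compatible with the involution, i.e. that under the filtered chain homotopy equivalence $C\simeq\F[U,U^{-1}]\oplus A$ the map $\iota_C$ becomes $\mathrm{id}\oplus\iota_A$ for some skew-filtered $U$-equivariant $\iota_A$ on $A$ (with the $\iota_K$-complex axioms, in particular $\iota_A^2\simeq\mathrm{id}+\Phi\circ\Psi$, which on an acyclic complex is automatic up to what is needed). The point is that $\iota_C\circ G\simeq G$ and $F\circ\iota_C\simeq F$, so the summand $\operatorname{im}(P')$ is preserved by $\iota_C$ up to homotopy and $\iota_C$ acts there as the identity up to homotopy; transporting $\iota_C$ through the equivalence yields a map of the form $\begin{pmatrix}\mathrm{id}+(\text{htpy trivial}) & * \\ * & \iota_A\end{pmatrix}$, and I would then perform a filtered change of basis (conjugating the equivalence by a suitable filtered automorphism of $\F[U,U^{-1}]\oplus A$) to kill the off-diagonal terms and the correction on the $\F$-summand. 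This is a finite-dimensional obstruction-theoretic argument: the off-diagonal entry $A\to\F[U,U^{-1}]$ and its partner are null-homotopic because they factor through the acyclic summand or because $F$ and $G$ already conjugate $\iota_C$ to the identity up to homotopy, so one can absorb them.

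The main obstacle I expect is precisely this last step: making the conjugation explicit and checking that the change of basis can be chosen \emph{filtered} and $U$-equivariant while simultaneously straightening all the pieces (the off-diagonal maps and the self-homotopy of $\iota_C|_{\operatorname{im} P'}$) without reintroducing new errors, and verifying that the resulting $\iota_A$ satisfies the $\iota_K$-complex relation $\iota_A^2\simeq\mathrm{id}+\Phi_A\circ\Psi_A$. The non-involutive splitting is routine given \cite{Homsurvey}; the bookkeeping to promote it equivariantly, tracking skew-filtered homotopies throughout, is where the real work lies.
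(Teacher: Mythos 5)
Your first step reproduces the paper's Lemma~\ref{lem:split} almost verbatim, and is if anything slightly over-engineered: in your conventions $F\circ G$ is a degree-zero chain self-map of $\F[U,U^{-1}]$, which has zero differential, so $F\circ G=\id$ on the nose and $P=G\circ F$ is already an honest filtered idempotent --- no ``replace $P$ by a genuine idempotent $P'$'' step is needed. The splitting $C\cong \F[U,U^{-1}]\oplus\ker F$ with acyclic complement then follows by elementary algebra, exactly as in the paper.

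For the second step you have correctly identified all the needed facts: writing $p_1$ for the projection onto the free summand and $p_2=\id+p_1$, the three correction terms $p_1\iota_C p_1+p_1$, $p_1\iota_C p_2$, and $p_2\iota_C p_1$ are each null-homotopic via homotopies built from the skew-filtered homotopies witnessing $\iota_C\circ G\simeq G$ and $F\circ\iota_C\simeq F$. But the mechanism you propose for exploiting this --- conjugating by a filtered automorphism of $\F[U,U^{-1}]\oplus A$ so that $\iota_C$ splits \emph{on the nose} --- is both unnecessary and not clearly achievable: conjugation by a filtered chain automorphism does not in general kill a null-homotopic off-diagonal block (one would have to solve equations of the shape $u\circ(\id+\iota_A)=b$ within filtered chain maps, and there is no reason such a $u$ exists), and you flag this yourself as the main obstacle. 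The point you are missing is that nothing requires an on-the-nose splitting: homotopy equivalence of $\iota_K$-complexes (Definition~\ref{def:iotaK}) only asks that the involutions be intertwined up to skew-filtered homotopy, so it suffices to replace $\iota_C$ by a homotopic map. The paper simply sets $\iota'_C:=p_1+p_2\circ\iota_C\circ p_2$ --- i.e., adds to $\iota_C$ precisely the three null-homotopic terms above --- and writes down the explicit homotopy $J=H_F\circ G+F\circ H_G\circ p_2$ with $\iota_C+\iota'_C=\d\circ J+J\circ\d$; the map $\iota'_C$ visibly equals $\id\oplus\iota_A$ with $\iota_A=p_2\circ\iota_C|_A$. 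Finally, your worry about verifying $\iota_A^2\simeq\id+\Phi_A\circ\Psi_A$ is moot: $(A,\iota_A)$ is not claimed to be an $\iota_K$-complex (its homology vanishes), only an acyclic summand. So: right ingredients, but the last step should be a homotopy of $\iota_C$, not a change of basis.
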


\begin{corollary}\label{cor:stableequiv}
If $(C_1, \iota_{C_1})$ is locally equivalent to $(C_2, \iota_{C_2})$, then we have the following filtered chain homotopy equivalence
\[ (C_1, \iota_{C_1}) \oplus (A_1, \iota_{A_1}) \simeq (C_2, \iota_{C_2}) \oplus (A_2, \iota_{A_2}), \]
for some acyclic complexes $A_1$ and $A_2$.
\end{corollary}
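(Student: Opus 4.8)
The plan is to deduce the stable equivalence from Proposition \ref{prop:split} by passing through the group $\mathfrak{I}_K$, using that the products $\times_1,\times_2$ (which agree up to filtered chain homotopy equivalence, as recalled above) are compatible with that equivalence. Write $\mathbf{1}=(\F[U,U^{-1}],\id)$ for the unit. Since $(C_1,\iota_{C_1})$ is locally equivalent to $(C_2,\iota_{C_2})$, they define the same class in $\mathfrak{I}_K$, so
\[ [C_1,\iota_{C_1}]\times[C_2^*,\iota_{C_2}^*]=[C_2,\iota_{C_2}]\times[C_2^*,\iota_{C_2}^*]=[\mathbf{1}], \]
using that the inverse of $[C_2,\iota_{C_2}]$ is $[C_2^*,\iota_{C_2}^*]$. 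Hence $(C_1,\iota_{C_1})\times(C_2^*,\iota_{C_2}^*)$ is locally equivalent to $\mathbf{1}$, and likewise $(C_2^*,\iota_{C_2}^*)\times(C_2,\iota_{C_2})$ is locally equivalent to $\mathbf{1}$. Applying Proposition \ref{prop:split} to each then yields filtered chain homotopy equivalences
\[ (C_1,\iota_{C_1})\times(C_2^*,\iota_{C_2}^*)\simeq \mathbf{1}\oplus(A,\iota_A),\qquad (C_2^*,\iota_{C_2}^*)\times(C_2,\iota_{C_2})\simeq\mathbf{1}\oplus(A',\iota_{A'}), \]
with $A,A'$ acyclic.

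Next I would evaluate the triple product $(C_1,\iota_{C_1})\times(C_2^*,\iota_{C_2}^*)\times(C_2,\iota_{C_2})$ in two ways. Tensoring the first equivalence on the right by $(C_2,\iota_{C_2})$ and using that $\mathbf{1}$ is a two-sided unit gives
\[ (C_1,\iota_{C_1})\times(C_2^*,\iota_{C_2}^*)\times(C_2,\iota_{C_2})\simeq(C_2,\iota_{C_2})\oplus\big((A,\iota_A)\times(C_2,\iota_{C_2})\big), \]
while reassociating and using the second equivalence tensored on the left by $(C_1,\iota_{C_1})$ gives
\[ (C_1,\iota_{C_1})\times(C_2^*,\iota_{C_2}^*)\times(C_2,\iota_{C_2})\simeq(C_1,\iota_{C_1})\oplus\big((C_1,\iota_{C_1})\times(A',\iota_{A'})\big). \]
Comparing the two displays produces the asserted equivalence, with $(A_2,\iota_{A_2})=(A,\iota_A)\times(C_2,\iota_{C_2})$ and $(A_1,\iota_{A_1})=(C_1,\iota_{C_1})\times(A',\iota_{A'})$.

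To see that $A_1$ and $A_2$ are acyclic, I would note that the underlying chain complex of either product is the tensor product over $\F[U,U^{-1}]$ of the underlying complexes, so the modified involution is irrelevant to the homology. Since each $\iota_K$-complex is finitely generated and free over the PID $\F[U,U^{-1}]$, the Künneth theorem applies; as $H_*(A)=H_*(A')=0$, both the tensor and $\mathrm{Tor}$ terms vanish, giving $H_*(A_1)=H_*(C_1\otimes A')=0$ and $H_*(A_2)=H_*(A\otimes C_2)=0$.

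The main obstacle is to justify the two manipulations at the level of filtered chain homotopy equivalence rather than merely local equivalence. Concretely, one needs that the product is functorial for $\simeq$ (so that tensoring a filtered chain homotopy equivalence of $\iota_K$-complexes with a fixed complex yields another such equivalence), that it is associative up to $\simeq$, and that it distributes over $\oplus$ with $\mathbf{1}$ as a two-sided unit. The unit and distributivity statements are immediate, since the formal derivatives $\Phi,\Psi$ of the zero differential on $\F[U,U^{-1}]$ vanish and $\Phi,\Psi$ respect direct sums, reducing the relevant products to $\id\otimes\iota$ or $\iota\otimes\id$ and to block sums. Functoriality and associativity strengthen the local-equivalence-level properties underlying Zemke's group structure on $\mathfrak{I}_K$; I expect they follow from the associativity and homotopy-invariance of $\otimes_{\F[U,U^{-1}]}$ together with a direct check that the modified involutions of $\times_1$ are carried to one another by the canonical associativity isomorphism of the tensor product, up to skew-filtered $U$-equivariant homotopy.
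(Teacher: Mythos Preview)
Your proposal is correct and follows essentially the same route as the paper: apply Proposition~\ref{prop:split} to the products $(C_1,\iota_{C_1})\times(C_2^*,\iota_{C_2}^*)$ and $(C_2^*,\iota_{C_2}^*)\times(C_2,\iota_{C_2})$, then evaluate the triple product two ways. You are in fact more thorough than the paper in two respects: you explicitly justify acyclicity of the summands via K\"unneth, and you flag the needed functoriality, associativity, unit, and distributivity properties of $\times$ at the level of filtered chain homotopy equivalence, which the paper absorbs into a single citation of \cite[Theorem~1.1]{Zemkesum}.
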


\begin{proof}[Proof of Corollary \ref{cor:stableequiv}]
If $(C_1, \iota_{C_1})$ and $(C_2, \iota_{C_2})$ are locally equivalent, then by \cite[Proposition 2.6]{Zemkesum} $(C_1, \iota_{C_1}) \times (C^*_2, \iota^*_{C_2})$ is locally equivalent to $(\F[U, U^{-1}], \id)$, where $\times$ denotes either $\times_1$ or $\times_2$. Then by Proposition \ref{prop:split}, $(C_1, \iota_{C_1}) \times (C^*_2, \iota^*_{C_2})$ is filtered chain homotopy equivalent to $(\F[U, U^{-1}], \id) \oplus (A, \iota_A)$. 

Consider $(C_1, \iota_{C_1}) \times (C^*_2, \iota^*_{C_2}) \times (C_2, \iota_{C_2})$. By \cite[Theorem 1.1]{Zemkesum}, the product $\times$ respects splittings and $(\F[U, U^{-1}], \id)$ is the identity element with respect to $\times$. Then
\begin{align*}
	((C_1, \iota_{C_1}) \times (C^*_2, \iota^*_{C_2})) \times (C_2, \iota_{C_2}) &\simeq \big((\F[U, U^{-1}], \id) \oplus (A, \iota_A)\big) \times  (C_2, \iota_{C_2})\\
		&\simeq (C_2, \iota_{C_2}) \oplus (A', \iota_A'),
\end{align*}
where $(A', \iota_A') = (A, \iota_A) \times (C_2, \iota_{C_2})$. Similarly, for some acyclic complex $D$, we have
\begin{align*}
	(C_1, \iota_{C_1}) \times ((C^*_2, \iota^*_{C_2}) \times (C_2, \iota_{C_2})) &\simeq (C_1, \iota_{C_1}) \times \big((\F[U, U^{-1}], \id) \oplus (D, \iota_D)\big)\\
		&\simeq (C_1, \iota_{C_1}) \oplus (D', \iota_D'),
\end{align*}
where $(D', \iota_D') = (D, \iota_D) \times (C_2, \iota_{C_2})$. This concludes the proof of the corollary.
\end{proof}

Using the language of local equivalence, we reprove \cite[Theorem 1]{Homsurvey}.

\begin{lemma}\label{lem:split}
If $(C, \iota_C)$ is locally equivalent $(\F[U, U^{-1}], \id)$, then $C$ is filtered chain homotopic to $\F[U, U^{-1}] \oplus A$.
\end{lemma}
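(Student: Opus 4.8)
The plan is to extract from the local equivalence maps $F\colon C \to \F[U,U^{-1}]$ and $G\colon \F[U,U^{-1}] \to C$ enough algebraic information to split off a copy of $\F[U,U^{-1}]$ with trivial filtration. Since local equivalence only requires $F$ and $G$ to be chain maps (not inverse to each other), the composite $F\circ G \colon \F[U,U^{-1}] \to \F[U,U^{-1}]$ is a grading-preserving $\F[U,U^{-1}]$-equivariant chain map, hence multiplication by an element of $\F$; the key first observation is that this element must be $1$ (i.e., $F\circ G = \id$), since $F\circ G$ induces an isomorphism on homology. (Here I use that a local equivalence in Zemke's sense, like a local equivalence in the Heegaard Floer setting, induces an isomorphism on the relevant localized homology — concretely, $F$ and $G$ are both required to induce isomorphisms on $H_*$, which is $\F[U,U^{-1}]$ on both sides.)

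Next I would use this to produce the splitting at the level of filtered chain complexes, ignoring the involution for now. Let $g \in C$ be a cycle generating $H_*(C) \cong \F[U,U^{-1}]$, chosen so that $G(1) = g$ up to adjusting by the $U$-action; then $F(g) = 1$ and $g$ sits in some filtration level. The submodule $\F[U,U^{-1}]\cdot g \subseteq C$ is a subcomplex isomorphic to $(\F[U,U^{-1}], \id)$ with the filtration inherited from $C$ — but a priori this inherited filtration might not be the trivial one. Here I would invoke the standard structure theory for filtered chain complexes over $\F[U,U^{-1}]$ (the same reduction used in \cite[Theorem 1]{Homsurvey}): $C$ is filtered chain homotopy equivalent to a reduced model, and in a reduced model a cycle generating the homology must generate a trivially-filtered $\F[U,U^{-1}]$ summand, since any other basis element in its filtration level could be cancelled against it. So after passing to a reduced model, $C \simeq \F[U,U^{-1}]\langle g\rangle \oplus A$ as filtered complexes, with $A$ acyclic because $H_*(C)$ is entirely carried by $g$. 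This is precisely the statement of the lemma.

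The main obstacle — and the reason this lemma is stated separately and proved before Proposition \ref{prop:split} — is keeping careful track of what "reduced" means relative to a $(\Z\oplus\Z)$-filtration rather than a single $\Z$-filtration, and making sure the cancellation argument is compatible with the $U$-equivariance and the grading constraints. Concretely, I need to argue that after a filtered change of basis I can arrange that no basis element other than the distinguished generator $g$ lies in filtration level $\le (i_0, j_0)$ where $(i_0,j_0)$ is the filtration level of $g$ and that $g$ does not appear in the differential of any other basis element; a standard Gaussian-elimination / cancellation lemma in the filtered setting handles this, but it must be stated so that it applies to the bifiltration. Once that is in place the acyclicity of the complementary summand $A$ is immediate from the long exact sequence (or simply from the fact that $H_*$ of the whole complex is $\F[U,U^{-1}]$, generated by $[g]$).

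I would record the needed cancellation fact as follows: if $x, y$ are filtered basis elements of a reduced-enough model with $\langle \del x, y\rangle = U^0$ and the filtration level of $x$ equals that of $y$ (so cancelling them is filtered), then one may cancel the pair to obtain a filtered chain homotopy equivalent complex; iterating, every filtered complex over $\F[U,U^{-1}]$ splits as a trivially-filtered free part realizing the homology plus an acyclic part. Applying this with $x = g$ forces the free part to be exactly $\F[U,U^{-1}]\langle g\rangle$ with trivial filtration, completing the proof of Lemma \ref{lem:split}.
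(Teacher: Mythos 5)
Your first step is correct and is the same as the paper's: since $\F[U,U^{-1}]$ has trivial differential, the grading-preserving $U$-equivariant endomorphism $F\circ G$ of $\F[U,U^{-1}]$ (in your labelling) induces an isomorphism on homology and is therefore literally the identity. The gap is in how you then produce the splitting. You reduce to the assertion that in a reduced model a cycle generating the homology automatically generates a trivially-filtered $\F[U,U^{-1}]$-summand, and more generally that ``every filtered complex over $\F[U,U^{-1}]$ splits as a trivially-filtered free part realizing the homology plus an acyclic part.'' That assertion is false; it is precisely the content of the lemma, not a general structural fact, and it genuinely requires the maps $F$ and $G$. The complex $\CFKi(T_{2,3})$ is a counterexample: in its usual reduced model it has basis $a,b,c$ with $\del a=\del c=0$ and $\del b=Ua+c$, its homology is $\F[U,U^{-1}]$ generated by $[a]$, yet the homology generator appears in $\del b$ and the change of basis $c\mapsto c+Ua$ that would remove it strictly raises the filtration level of $c$, hence is not a filtered change of basis. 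Indeed no splitting of the lemma's form can exist for the trefoil, since it would force $\tau(T_{2,3})=0$ (and $\varepsilon(T_{2,3})=0$), contradicting $\tau(T_{2,3})=1$. So Gaussian elimination within filtration levels cannot by itself isolate the homology generator; the local equivalence hypothesis must enter the splitting step, not merely the determination of the filtration level of $g$.

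The repair is short and is what the paper does. You have already shown $F\circ G=\id$, so $\pi=G\circ F\co C\to C$ is a filtered, grading-preserving, $U$-equivariant chain idempotent. Hence $C=\im\pi\oplus\ker\pi=\im G\oplus\ker F$ as filtered complexes (both projections $\pi$ and $\id+\pi$ are filtered, so the induced filtration on $C$ is the direct sum of the induced filtrations); explicitly, $z\mapsto (G\circ F(z),\, z+G\circ F(z))$ and $(x,y)\mapsto x+y$ are mutually inverse filtered isomorphisms. The summand $\im G=\F[U,U^{-1}]\cdot g$ carries the standard filtration because $G$ and its inverse $F|_{\im G}$ are both filtered and grading-preserving, and $\ker F$ is acyclic because $F$ is a surjective quasi-isomorphism. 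No passage to a reduced model and no cancellation lemma is needed.
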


\begin{proof}[Proof of Lemma \ref{lem:split}]
Since $(C, \iota_C)$ and $(\F[U, U^{-1}], \id)$ are locally equivalent, there exist grading-preserving, filtered chain maps
\begin{align*}
F \co &\F[U, U^{-1}] \rightarrow C \\
G \co &C \rightarrow \F[U, U^{-1}]
\end{align*}
that induce isomorphisms on homology. Since $\F[U, U^{-1}]$ is isomorphic to its homology, $G$ is surjective and $G \circ F = \id$. Then a standard algebra argument shows that $C$ is filtered isomorphic to $\F[U, U^{-1}] \oplus \ker G$. Namely, $\Phi \co \F[U, U^{-1}] \oplus \ker G \rightarrow C$ given by $(x, y) \mapsto x+y$ and $\Psi \co C \rightarrow \F[U, U^{-1}] \oplus \ker G$ given by $z \mapsto ( F \circ G(z), z+F\circ G(z))$ provide the necessary isomorphisms, where we identify $\F[U, U^{-1}]$ with $\im F$.
\end{proof}

Notice that in general $\iota_C$ does not respect the splitting in the above lemma. However, we will show that $\iota_C$ is homotopic to a map that does split.

\begin{proof}[Proof of Proposition \ref{prop:split}]
By Lemma \ref{lem:split}, we may assume $C$ is of the form $\F[U, U^{-1}] \oplus A$. Since $(C, \iota_C)$ and $(\F[U, U^{-1}], \id)$ are locally equivalent, there exist grading-preserving, filtered chain maps
\begin{align*}
F \co &(\F[U, U^{-1}], \id) \rightarrow (C, \iota_C) \\
G \co &(C, \iota_C) \rightarrow (\F[U, U^{-1}], \id)
\end{align*}
such that $F \circ \id \simeq \iota_C \circ F$ via a skew-filtered chain homotopy $H_F$ and $G \circ \iota_C \simeq \id \circ G$ via a skew-filtered chain homotopy $H_G$. 

We consider the splitting given in Lemma \ref{lem:split}. Let $p_i \co \F[U, U^{-1}] \oplus A \rightarrow \F[U, U^{-1}] \oplus A$ denote projection onto the $i^{\textup{th}}$ factor. We have that $p_1=F \circ G$ and $p_2= \id + F \circ G$. 

Define
\[ \iota'_C (x, y) = (x, 0) + p_2 \circ \iota_C(0, y). \]
We claim that $\iota_C \simeq \iota'_C$ via the homotopy $J=H_F \circ G + F \circ H_G \circ p_2$. Indeed, 
\begin{align*}
	\iota_C(x, y) + \iota'_C(x,y) &= \iota_C(x, 0) + \iota_C(0,y) + (x,0) + p_2 \circ \iota_C(0, y) \\
		&= \iota_C(x, 0) + \iota_C(0,y) + (x,0) + \iota_C(0, y) + F \circ G \circ \iota_C (0, y) \\
		&= \iota_C \circ F \circ G (x, y) + F \circ \id \circ G (x,y) + F \circ G \circ \iota_C (0, y) + F \circ \id \circ G(0,y)\\
		&= \d \circ H_F \circ G (x, y) + H_F \circ \d \circ G  (x,y) +  F \circ \d \circ H_G (0, y) + F \circ H_G \circ \d (0, y) \\
		&= \d \circ J(x,y) + J \circ \d (x,y).
\end{align*}
It is straightforward to check that $\iota'_C$ respects the splitting $\F[U, U^{-1}] \oplus A$ and that it is the identity on the first factor, as desired.
\end{proof}

\bibliographystyle{amsalpha}
\bibliography{bib}

\end{document}